\newtheorem{theorem}{Theorem}[section]
\newtheorem{corollary}[theorem]{Corollary}
\newtheorem{lemma}[theorem]{Lemma}
\newtheorem{proposition}[theorem]{Proposition}
\newtheorem{definition-proposition}[theorem]{Definition-Proposition}
\newtheorem{question}[theorem]{Question}
\theoremstyle{definition}
\newtheorem{definition}[theorem]{Definition}
\newtheorem{remark}[theorem]{Remark}
\newtheorem{example}[theorem]{Example}
\newcommand{\Ext}{\operatorname{Ext}\nolimits}
\newcommand{\Hom}{\operatorname{Hom}\nolimits}
\newcommand{\Tr}{\operatorname{Tr}\nolimits}
\renewcommand{\mod}{\mathsf{mod}\hspace{.01in}}
\newcommand{\add}{\mathsf{add}\hspace{.01in}}
\newcommand{\RHom}{\mathbf{R}\strut\kern-.2em\operatorname{Hom}\nolimits}
\numberwithin{equation}{section}
\def\Tr{\mathop{\rm Tr}\nolimits}
\def\id{\mathop{\rm id}\nolimits}
\def\pd{\mathop{\rm pd}\nolimits}
\begin{document}
\title{$\tau$-rigid modules over Auslander Algebras}
\thanks{2000 Mathematics Subject Classification: 16G10, 16E10.}
\thanks{Keywords: Auslander algebra,  $\tau$-rigid module,
 tilting module}
 \thanks{ The author is supported by NSFC
(Nos.11401488,11571164 and 11671174), NSF of Jiangsu Province (BK
20130983) and Jiangsu Government Scholarship for Overseas Studies
(JS-2014-352). }

\author{Xiaojin Zhang}
\address{School of Mathematics and Statistics, NUIST,
Nanjing, 210044, P. R. China}
\email{xjzhang@nuist.edu.cn}
\maketitle

\begin{abstract} We give a characterization of
$\tau$-rigid modules over Auslander algebras in terms of projective
dimension of modules. Moreover, we show that for an Auslander
algebra $\Lambda$ admitting finite number of non-isomorphic basic
tilting $\Lambda$-modules and tilting $\Lambda^{op}$-modules, if all
indecomposable $\tau$-rigid $\Lambda$-modules of projective
dimension $2$ are of grade $2$, then $\Lambda$ is $\tau$-tilting
finite.
\end{abstract}


\section{Introduction}

Recently Adachi, Iyama and Reiten \cite{AIR} introduced
$\tau$-tilting
 theory to generalize the classical tilting theory in terms of
 mutations. $\tau$-tilting
 theory is close to the silting theory introduced by [AiI] and the cluster tilting
 theory in the sense of \cite{KR, IY, BMRRT}.

  Note that $\tau$-tilting theory depends on $\tau$-rigid
  modules. So it is very interesting to find all $\tau$-rigid modules for a given
 algebra. There are some works on this topic (See \cite{A1, A2, AAC, IJY,IRRT, J, M, HuZh,AnMV,W,Z} and so on).
In particular, Iyama and Zhang \cite{IZ} classified all the support
$\tau$-tilting modules and indecomposable $\tau$-rigid modules for
the Auslander algebra $\Gamma$ of $K[x]/(x^n)$. They showed that the
number of non-isomorphic basic support $\tau$-tilting
$\Gamma$-modules is exactly $(n+1)!$. For an arbitrary Auslander
algebra $\Lambda$, little is known on $\tau$-rigid
$\Lambda$-modules. So a natural question is:

\begin{question}\label{1.0}

How to judge $\tau$-rigid modules over an arbitrary Auslander
algebra?

\end{question}

 Our first goal in this paper is to give a partial answer to this
 question. Throughout this paper all algebras are finite-dimensional
 algebras over a field $K$ and all modules are finitely generated right
 modules.

For an algebra $\Lambda$, denote by $(-)^*$ the functor ${\rm
Hom}_{\Lambda}(-,\Lambda)$. For a $\Lambda$-module $M$, denote by
${\rm pd}_{\Lambda}M$ (resp. ${\rm id}_{\Lambda}M$) the projective
dimension (resp. injective dimension) of $M$. Denote by ${\rm
grade}M$ the grade of $M$. Then we have the following theorem.

\vspace{0.2cm}

\begin{theorem}\label{1.1}(Theorems \ref{2.6} and \ref{2.7}, Corollary \ref{2.b}) Let $\Lambda$ be an Auslander algebra and $M$ a
$\Lambda$-module. Then we have the following:
\begin{enumerate}[\rm(1)]
\setlength{\itemsep}{0pt}
\item  Every simple module $S$ is $\tau$-rigid.
\item  If ${\rm pd}_{\Lambda}M=1$, then $M$ is ($\tau$-)rigid if and only if $\Ext_{\Lambda}^2(N,M)=0$,
where $N=M^{**}/M$.
\item If ${\rm grade}M=2$, then $M$ is $\tau$-rigid if and only if ${\rm
Tr}M$ is $\tau$-rigid with ${\rm pd}_{\Lambda}{\rm Tr}M=1$.
\item  If $\Lambda$ admits a unique simple module $S$ with $\pd_{\Lambda}S=2$, then
\begin{itemize}
\item[\rm (a)] Every indecomposable module $M$ with $\pd_{\Lambda}M=1$ is
($\tau$-)rigid.
\item [\rm (b)] All indecomposable $\tau$-rigid $\Lambda$-modules $N$ with
$\pd_{\Lambda}N=2$ are of grade $2$.
\end{itemize}
\end{enumerate}
\end{theorem}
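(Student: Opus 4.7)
The proof splits along the four statements. Throughout, I rely on two structural features of an Auslander algebra $\Lambda$: $\gldim \Lambda \leq 2$, and the dominant dimension of $\Lambda$ is at least $2$, so the first two terms of a minimal injective resolution of $\Lambda$ are projective-injective. A useful consequence is that the Gabriel quiver of $\Lambda$ has no loops, so $\Ext_\Lambda^1(S,S) = 0$ for every simple $S$. For (1), I take a minimal projective presentation $P_1 \xrightarrow{f} P_0 \to S \to 0$ and apply the criterion that $S$ is $\tau$-rigid iff $\Hom_\Lambda(f, S)$ is surjective. Because $\Hom_\Lambda(P_1, S)$ counts the multiplicity of $S$ in $\rad P_0 / \rad^2 P_0$, the no-loops condition forces $\Hom_\Lambda(P_1, S) = 0$, so the surjectivity is automatic.

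For (2), a preliminary observation is crucial: in an Auslander algebra, $\pd_\Lambda M \leq 1$ forces $M^{**}$ to be projective. Indeed, dualising the minimal presentation $0 \to P_1 \to P_0 \to M \to 0$ exhibits $M^{*}$ as the second syzygy of $\Tr M$; since $\pd_\Lambda \Tr M \leq 2$, this syzygy is projective, and hence $M^{**}$ is projective as well. Therefore $\Ext_\Lambda^i(M^{**}, M) = 0$ for all $i \geq 1$. The Auslander--Bridger sequence
\[
0 \to \Ext_\Lambda^1(\Tr M, \Lambda) \to M \to M^{**} \to \Ext_\Lambda^2(\Tr M, \Lambda) \to 0,
\]
combined with the dominant-dimension hypothesis, should give $\Ext_\Lambda^1(\Tr M, \Lambda) = 0$ and hence a short exact sequence $0 \to M \to M^{**} \to N \to 0$. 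Applying $\Hom_\Lambda(-, M)$ and using the two vanishings above together with $\Ext_\Lambda^2(M, M) = 0$ (from $\pd_\Lambda M = 1$), the long exact sequence collapses to an isomorphism $\Ext_\Lambda^1(M, M) \cong \Ext_\Lambda^2(N, M)$, which gives the equivalence (note $\tau$-rigid $=$ rigid since $\pd_\Lambda M \leq 1$).

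For (3), the hypothesis $\grade M = 2$ yields $M^* = 0 = \Ext_\Lambda^1(M, \Lambda)$. Dualising a minimal projective resolution of $M$ and using $M^* = 0$ produces a short projective resolution $0 \to P_0^* \to P_1^* \to \Tr M \to 0$, so $\pd_\Lambda \Tr M \leq 1$; the case $\pd_\Lambda \Tr M = 0$ is excluded because $\Tr \Tr M = M \neq 0$ on summand-free modules. The $\tau$-rigidity equivalence then follows from the identity $\Hom_\Lambda(M, \tau M) \cong D(\Tr M \otimes_\Lambda M)$, applied symmetrically over $\Lambda$ and $\Lambda^{\op}$ and combined with $\Tr \Tr M = M$, which interchanges the roles of $M$ and $\Tr M$.

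Finally, for (4) I combine (2) and (3) with the hypothesis that $S$ is the unique simple of projective dimension $2$. For (a), given an indecomposable $M$ with $\pd_\Lambda M = 1$, any non-zero contribution to $\Ext_\Lambda^2(N, M)$ would have to involve $S$, and the uniqueness of $S$ combined with the indecomposability of $M$ rules this out, so (2) yields $\tau$-rigidity. For (b), given an indecomposable $\tau$-rigid $N$ with $\pd_\Lambda N = 2$, I aim to show $\grade N = 2$ by ruling out $\grade N \in \{0, 1\}$: a non-zero element of $\Hom_\Lambda(N, \Lambda)$ or $\Ext_\Lambda^1(N, \Lambda)$ would, under the uniqueness of $S$, translate via the Auslander--Reiten formula into a non-zero element of $\Hom_\Lambda(N, \tau N)$, contradicting $\tau$-rigidity. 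The hardest single step throughout is the injectivity $M \hookrightarrow M^{**}$ in (2); this is where the dominant-dimension hypothesis must be used carefully to force $\Ext_\Lambda^1(\Tr M, \Lambda) = 0$ for every $M$ with $\pd_\Lambda M = 1$.
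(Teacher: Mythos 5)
Your parts (1) and (3) are essentially sound. For (1) you invoke the no-loop theorem for algebras of finite global dimension, whereas the paper (Proposition \ref{2.4}) deliberately gives a direct case-by-case proof, using Proposition \ref{2.2} to realize the minimal projective resolution of a simple of projective dimension $2$ by an almost split sequence and deriving a contradiction from an irreducible map $X\to X$; either route is legitimate. For (3) your dualized-resolution argument together with the transfer of $\tau$-rigidity across $\Tr$ is exactly Proposition \ref{2.10} and Corollary \ref{2.b}. In part (2), however, the step you yourself flag as the hardest --- the injectivity of $M\to M^{**}$, equivalently $\Ext_{\Lambda^{op}}^{1}(\Tr M,\Lambda)=0$ --- is left at the level of ``should give'' and is never actually derived. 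The paper's argument (Lemma \ref{2.5} via Lemma \ref{2.3}) is: since $\pd_\Lambda M\le 1$, $\Tr M\cong\Ext_\Lambda^{1}(M,\Lambda)$; the Fossum--Griffith--Reiten characterization of Auslander's $2$-Gorenstein algebras shows every submodule of $\Ext_\Lambda^{1}(M,\Lambda)$ has grade at least $1$; and since every simple over an Auslander algebra has grade $0$ or $2$, an induction on composition factors upgrades this to $\grade\Tr M\ge 2$, which is the required vanishing. Without this (or an equivalent) argument your proof of (2) is incomplete.

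The serious gap is part (4). The paper (Theorem \ref{2.7}) proves it by first \emph{classifying} the algebras in question: if $\Lambda$ has a unique simple of projective dimension $2$, then by Proposition \ref{2.2} the base algebra $R$ has a unique non-projective indecomposable $X$, which is forced to be simple, and one deduces that $R$ is either $KQ$ with $Q\colon 1\to 2$ or the local Nakayama algebra with radical square zero; statements (a) and (b) are then verified directly on these two small Auslander algebras. Your sketch replaces this with two unsubstantiated implications: in (a), that indecomposability of $M$ together with uniqueness of $S$ forces $\Ext_\Lambda^{2}(N,M)=0$, and in (b), that a nonzero element of $\Hom_\Lambda(N,\Lambda)$ or $\Ext_\Lambda^{1}(N,\Lambda)$ would produce a nonzero element of $\Hom_\Lambda(N,\tau N)$. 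Neither implication is argued, and neither holds for Auslander algebras in general: Example \ref{3.10} exhibits an indecomposable $\tau$-rigid module of projective dimension $2$ whose grade is not $2$, and Example \ref{3.9}(5) exhibits a rigid indecomposable $M$ with $\pd_\Lambda M=1$ and $\Ext_\Lambda^{2}(S(2),M)\neq 0$ for a composition factor $S(2)$ of $M^{**}/M$. Any correct proof must therefore use the uniqueness hypothesis in a concrete structural way, as the paper's classification does; your sketch does not supply such a mechanism.
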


\vspace{0.2cm}

On the other hand, Demonet, Iyama and Jasso gave a general
description of algebras with finite number of support $\tau$-tilting
modules in \cite{DIJ} where they call the algebras $\tau$-tilting
finite algebras. It is clear that an algebra $\Lambda$ is
$\tau$-tilting finite if and only if so is its opposite algebra
$\Lambda^{op}$. We should remark that an algebra is $\tau$-tilting
finite implies that there are finite number of non-isomorphic basic
tilting $\Lambda$-modules and tilting $\Lambda^{op}$-modules. It is
natural to consider the following question.

\begin{question}\label{1.a}
When is an algebra admitting finite number of basic tilting
$\Lambda$-modules and tilting $\Lambda^{op}$-modules $\tau$-tilting
finite?
\end{question}

 It is obvious that algebras of finite
representation type are both tilting-finite and $\tau$-tilting
finite. However, we need a non-trivial case. Our second goal of this
paper is to give a more general answer to this question whenever
$\Lambda$ is an Auslander algebra. We prove the following theorem in
which the algebra is not necessary to be an Auslander algebra.

\begin{theorem}\label{1.2}(Theorem \ref{2.11}) Let $\Lambda$ be an algebra of global dimension
$2$ admitting finite number of basic tilting $\Lambda$-modules and
tilting $\Lambda^{op}$-modules. If all indecomposable $\tau$-rigid
modules with projective dimension $2$ are of grade $2$, then
$\Lambda$ is $\tau$-tilting finite.
\end{theorem}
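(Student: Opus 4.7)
The plan is to invoke the characterization of Demonet--Iyama--Jasso that $\Lambda$ is $\tau$-tilting finite if and only if there are only finitely many isoclasses of indecomposable $\tau$-rigid $\Lambda$-modules, and then to bound the latter count by splitting according to projective dimension. Since $\gldim \Lambda = 2$, every indecomposable $\tau$-rigid $\Lambda$-module $M$ satisfies $\pd_{\Lambda} M \in \{0,1,2\}$, so it suffices to bound each of the three classes.

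The case $\pd_{\Lambda} M = 0$ is immediate, since there are only finitely many indecomposable projectives. For $\pd_{\Lambda} M = 1$, the Auslander--Reiten formula yields $\Hom_{\Lambda}(M,\tau M)\cong D\Ext_{\Lambda}^{1}(M,M)$, so $\tau$-rigidity is equivalent to classical rigidity. Such $M$ is then a partial tilting module of projective dimension at most one, and by Bongartz completion $M$ is a direct summand of some basic tilting $\Lambda$-module. Since by hypothesis there are only finitely many basic tilting $\Lambda$-modules, and each admits only finitely many indecomposable summands, there are only finitely many such indecomposable $M$ up to isomorphism.

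For $\pd_{\Lambda} M = 2$, the assumption forces $\grade M = 2$. Then the gl.dim~$2$ analogue of Theorem~\ref{1.1}(3) shows that the transpose $\Tr$ induces a bijection between the indecomposable $\tau$-rigid $\Lambda$-modules of grade $2$ and the indecomposable $\tau$-rigid $\Lambda^{\op}$-modules of projective dimension $1$. Applying the previous paragraph to $\Lambda^{\op}$, whose collection of basic tilting modules is likewise finite by hypothesis, shows that there are only finitely many such $\Tr M$, hence only finitely many such $M$. Summing the three bounds gives finitely many indecomposable $\tau$-rigid $\Lambda$-modules in total, so $\Lambda$ is $\tau$-tilting finite.

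The principal obstacle is securing the statement I use in the $\pd=2$ case: namely, that the transpose bijection sending $\tau$-rigid modules to $\tau$-rigid modules, established in Theorem~\ref{1.1}(3) under the Auslander hypothesis, continues to hold assuming only $\gldim \Lambda = 2$. I expect this to follow from the four-term exact sequence $0 \to P_{0}^{*} \to P_{1}^{*} \to \Tr M \to 0$ forced by $\grade M = 2$ (giving $\pd_{\Lambda^{\op}}\Tr M = 1$), together with a direct identification of the form $\Hom_{\Lambda}(M,\tau M)\cong D\Ext_{\Lambda^{\op}}^{1}(\Tr M,\Tr M)$ obtained by transporting the Auslander--Reiten formula across $\Tr$. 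Once this transfer of $\tau$-rigidity is in place, the three cases assemble into the conclusion without further effort.
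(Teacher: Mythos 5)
Your proposal is correct and follows essentially the same route as the paper: handle projective dimension $\leq 1$ via the finitely many tilting modules over $\Lambda$ (Bongartz completion), and reduce the projective dimension $2$ case, via the grade-$2$ hypothesis and the transpose, to the finitely many tilting modules over $\Lambda^{op}$. The only points to note are that $\Tr$ gives a bijection onto the non-projective $\tau$-rigid \emph{submodules} of projective $\Lambda^{op}$-modules (hence an injection into the set of $\tau$-rigid modules of projective dimension $1$, which suffices for finiteness) rather than onto all of the latter, and that the two ingredients you flag as potential obstacles are already available at the required generality: the transfer of $\tau$-rigidity across $\Tr$ is the cited result of Adachi--Iyama--Reiten (Lemma \ref{2.9}), and the grade-$2$ correspondence is Proposition \ref{2.10}, which the paper states and proves for arbitrary algebras of global dimension $2$, not only Auslander algebras.
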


The paper is organized as follows:

In Section 2, we recall some preliminaries. In Section 3, we prove
the main results and give some examples to show the main results.

Throughout this paper, all algebras $\Lambda$ are basic connected
finite dimensional algebras over an algebraic closed field $K$ and
all $\Lambda$-modules are finitely generated right modules. Denote
by $\mod\Lambda$ the category of finitely generated right
$\Lambda$-modules. For $M\in\mod\Lambda$, denote by ${\rm add}M$ the
subcategory of direct summands of finite direct sum of $M$. We use
${\rm Tr}M$ to denote the Auslander transpose of $M$. Denote by
$\tau$ the $AR$-translation and denote by $|M|$ the number of
non-isomorphic indecomposable direct summands of $M$.

\vspace{0.2cm}

{\bf Acknowledgement} Part of this work was done when the author
visited Nagoya University in the year 2015. The author
 would like to thank Osamu Iyama, Laurent Demonet, Takahide Adachi,
Yuta Kimura, Yuya Mizuno and Yingying Zhang for useful discussion
and kind help. The author also wants to thank Osamu Iyama for
hospitality during his stay in Nagoya.

\section{Preliminaries}

In this section we recall some basic preliminaries for later use.
For an algebra $\Lambda$, denote by ${\rm gl.dim}\Lambda$ the global
dimension of $\Lambda$. We begin with the definition of Auslander
algebras.

\vspace{0.2cm}

\begin{definition}\label{2.1} An algebra $R$ is called an {\it Auslander algebra}  if ${\rm
gl.dim}R\leq2$ and $I_i(R)$ is projective for $i=0,1$, where
$I_i(R)$ is the $(i+1)$-th term in a minimal injective resolution of
$R$.
\end{definition}

\vspace{0.2cm}

Let $R$ be a representation-finite algebra  and $A$ an additive
generator of $\mod R$. Auslander proved that there is a one to one
correspondence between representation-finite algebras and Auslander
algebras via $R\mapsto{\rm End}_{R}(A)$. In this case, we call
$\mathrm{End}_{R} \left( A \right)$ the {\it Auslander algebra of
R}. Furthermore, for $X \in \mod R$ we denote by $ P_{X} =
\mathrm{Hom}_{R} \left( A,X \right)$ and $S_{X} = P_{X} /
\mathrm{rad}P_{X}$. The following statement [AuRS] is essential in
the proof of the main result.

\begin{proposition}\label{2.2} Let $X$ be an indecomposable $R$-module.
Then
\begin{itemize}
\item[\rm(1)] $\mathrm{pd}_{\Lambda} S_{X} \leq 1$ if and only if $X$
 is projective, and
$0 \rightarrow P_{\mathrm{rad}X} \rightarrow P_{X} \rightarrow S_{X}
 \rightarrow 0$
 is a minimal projective resolution of $S_{X}$.

\item[\rm(2)] $\mathrm{pd}_{\Lambda}S_{X} = 2$ if and only if
       $X$ is not projective, and the almost split sequence
 $0 \rightarrow \tau X \rightarrow E \rightarrow X \rightarrow 0$ gives
      a minimal projective resolution $0 \rightarrow P_{\tau X} \rightarrow
 P_{E} \rightarrow P_{X} \rightarrow S_{X} \rightarrow 0$ of $S_{X}$.
 \end{itemize}
 \end{proposition}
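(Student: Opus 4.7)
The plan is to exploit the Auslander correspondence: the functor $\Hom_R(A,-) \colon \mod R \to \mathsf{proj}\,\Lambda$ is fully faithful and its essential image is the category of projective $\Lambda$-modules, sending an indecomposable $Y \in \mod R$ to the indecomposable projective $P_Y$. Under this identification, $\Lambda$-module maps between projectives correspond bijectively to $R$-module maps, and the radical $\rad P_X$ consists of those morphisms $A \to X$ that are not split epimorphisms (equivalently, that factor through some non-isomorphism into each indecomposable summand of $X$).

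For part (1), I would assume $X$ is indecomposable projective. Then $\End_R(X)$ is local and $\rad X$ is the unique maximal submodule. Any morphism $f \colon A_i \to X$ from an indecomposable summand $A_i$ of $A$ is either a split epimorphism (forcing $A_i \cong X$ and $f$ an isomorphism, since $X$ is projective indecomposable) or has image in $\rad X$. Hence the inclusion $\rad X \hookrightarrow X$ induces an isomorphism $\Hom_R(A,\rad X) \xrightarrow{\sim} \rad P_X$, yielding the exact sequence
\[
0 \to P_{\rad X} \to P_X \to S_X \to 0.
\]
Minimality is automatic because the image of $P_{\rad X} \to P_X$ lies in $\rad P_X$, so $\pd_\Lambda S_X \le 1$.

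For part (2), I would take $X$ indecomposable non-projective and consider its almost split sequence $0 \to \tau X \to E \to X \to 0$. By the defining property of almost split sequences, a morphism $A_i \to X$ from an indecomposable $A_i$ factors through $E \to X$ precisely when it is not a split epimorphism, i.e., when it belongs to $\rad P_X$. Applying the left-exact functor $\Hom_R(A,-)$ to the almost split sequence therefore produces an exact sequence
\[
0 \to P_{\tau X} \to P_E \to P_X \to S_X \to 0.
\]
To see that this is minimal, I would use the fact that the maps $\tau X \to E$ and $E \to X$ are right/left minimal (any endomorphism of $E$ making the triangle commute is an automorphism), which under the equivalence $\Hom_R(A,-)$ translates to the images being contained in the radicals of the successive projectives. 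Since $\tau X \neq 0$, this gives $\pd_\Lambda S_X = 2$.

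The two directions together yield the stated equivalences: if $X$ is projective, (1) gives $\pd_\Lambda S_X \le 1$; if $X$ is not projective, (2) gives $\pd_\Lambda S_X = 2$. I expect the main subtlety to be the minimality of the resolution in (2) — specifically, verifying that no indecomposable summand of $P_{\tau X}$ maps isomorphically to a summand of $P_E$; this is precisely where the non-splitting of the almost split sequence enters, since a split component at the level of $R$-modules would yield a split component at the level of $\Lambda$-projectives via the fully faithful functor $\Hom_R(A,-)$.
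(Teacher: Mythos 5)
Your proof is correct: the paper itself states Proposition \ref{2.2} without proof, quoting it from [AuRS], and your argument---applying the fully faithful functor $\Hom_R(A,-)$, identifying $\rad P_X$ with the non-split-epimorphisms $A\to X$, and using the defining lifting property plus non-splitness of the almost split sequence to get exactness and minimality---is precisely the standard proof found there. No gaps; the one point worth stating explicitly is that a component $\tau X\to E_j$ being an isomorphism would split the almost split sequence, which you do note at the end.
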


For a positive integer $k$, an algebra $\Lambda$ is called
Auslander's $k$-Gorenstein if $\pd_{\Lambda}I_j(\Lambda)\leq j$ for
$0\leq j\leq k-1$. For a $\Lambda$-module $M$ and a positive integer
$i$, we call ${\rm grade}M\geq i$ if $\Ext_{\Lambda}^{
j}(M,\Lambda)=0$ for $0\leq j\leq i-1$. We need the following
result.

\begin{lemma}\label{2.3} Let $\Lambda$ be an Auslander algebra and $T
\in \mod\Lambda$.  For $j=1,2$,
\begin{itemize}

\item[\rm(1)] The subcategory $\{M|{\rm grade}M\geq j\}$ is closed under
submodules and factor modules.

\item[\rm(2)] Every simple $\Lambda$-module $S$ is either of grade $0$ or of
grade $2$.

\item[\rm(3)] ${\rm grade}\  \Ext_{\Lambda}^{j}(T,\Lambda)\geq
2$.
\item[\rm(4)] The projective dimension of any composition factor of
$\mathrm{Ext}_{\Lambda}^{2} \left( T, \Lambda \right)$ is $2$.
\end{itemize}
\end{lemma}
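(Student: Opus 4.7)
The plan is to prove the four parts in the order (2), (1), (3), (4), using Proposition \ref{2.2} and the Auslander 2-Gorenstein property $\pd_\Lambda I_j(\Lambda) \leq j$ for $j \leq 1$ as the two main ingredients. Starting with (2), I would split the simples $S = S_X$ by projective dimension. If $\pd_\Lambda S_X = 1$, so that $X$ is projective in $\mod R$, I apply $\Hom_\Lambda(-, \Lambda)$ to the resolution $0 \to P_{\rad X} \to P_X \to S_X \to 0$ and use the identification $\Hom_\Lambda(P_Y, \Lambda) \cong \Hom_R(Y, A)$ of left $\Lambda$-modules; the kernel of the induced restriction $\Hom_R(X, A) \to \Hom_R(\rad X, A)$ equals $\Hom_R(\mathrm{top}(X), A)$, which is nonzero since $A$ is an additive generator and $\mathrm{top}(X)$ is a summand of $A$, giving $\grade S_X = 0$. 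If $\pd_\Lambda S_X = 2$, so $X$ is non-projective, the minimal projective resolution arises from the almost split sequence $0 \to \tau X \to E \to X \to 0$; the surjectivity of $E \to X$ and the exactness of the AR-sequence force the $0$th and $1$st cohomologies of the dualized complex $\Hom_R(X, A) \to \Hom_R(E, A) \to \Hom_R(\tau X, A)$ to vanish, yielding $\grade S_X = 2$.

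For (1), I would invoke the standard consequence of the 2-Gorenstein hypothesis, namely $\grade \Ext^i_\Lambda(N, \Lambda) \geq i$ for $i = 1, 2$ and any $N \in \mod \Lambda$. Applied to a short exact sequence $0 \to K \to M \to N \to 0$ with $\grade M \geq j$, the long exact sequence of $\Hom_\Lambda(-, \Lambda)$, combined with the fact that the canonical map $L^* \to L^{***}$ is always split monic (so $L^{**} = 0$ forces $L^* = 0$), yields closure under both factor modules and submodules. For (3), the same grade bound gives $\grade \Ext^2_\Lambda(T, \Lambda) \geq 2$ directly, and the weaker bound $\grade \Ext^1_\Lambda(T, \Lambda) \geq 1$ is upgraded to $\grade \geq 2$ by combining (1) and (2): the closure properties iterate to closure under composition factors, so every composition factor of $\Ext^1_\Lambda(T, \Lambda)$ is a simple of grade $\geq 1$, hence of grade $2$ by (2), and extension closure (automatic for grade subcategories) propagates $\grade \geq 2$ back to the whole module. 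For (4), the composition factors of $\Ext^2_\Lambda(T, \Lambda)$ inherit $\grade \geq 2$ via (1), hence have grade exactly $2$ by (2); a simple $S$ with $\grade S = 2$ satisfies $\pd_\Lambda S \geq 2$, and together with $\pd_\Lambda S \leq \gldim \Lambda = 2$ this forces $\pd_\Lambda S = 2$.

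The main obstacle I anticipate is the submodule-closure portion of (1): closure under factor modules drops out of the opening terms of the long exact sequence together with the split-monic property of $L^* \to L^{***}$, whereas closure under submodules requires a more delicate use of the grade bounds on higher Ext modules and is where the Auslander $k$-Gorenstein structure is genuinely invoked.
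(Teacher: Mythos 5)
Your proofs of (2), (3) and (4) are sound. For (2) you take a genuinely different route from the paper: you dualize the minimal projective resolutions supplied by Proposition \ref{2.2} and read off $\Hom_{\Lambda}(S_X,\Lambda)\cong\Hom_R(X/\rad X,A)\neq 0$ in the projective case, and in the non-projective case deduce the vanishing of $\Ext^0$ and $\Ext^1$ from the left exactness of $\Hom_R(-,A)$ applied to the almost split sequence. The paper instead uses $\Ext^i_{\Lambda}(S,\Lambda)\cong\Hom_{\Lambda}(S,I_i(\Lambda))$ together with the projectivity of $I_0(\Lambda)$ and $I_1(\Lambda)$. Both are correct, and yours is more self-contained. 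Your treatment of (3) and (4) --- the FGR grade bound, the upgrade of $\grade\Ext^1_{\Lambda}(T,\Lambda)$ from $1$ to $2$ through composition factors and (2), and the implication $\grade S=2\Rightarrow\pd_{\Lambda}S=2$ --- coincides with the paper's.

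The gap is in (1), which the paper does not prove but cites from \cite{I}. Your toolkit (the long exact sequence, the bound $\grade\Ext^i_{\Lambda}(N,\Lambda)\geq i$, and the split monomorphism $L^*\to L^{***}$) does handle three of the four cases: factor modules for $j=1$ are immediate; submodules for $j=1$ follow from $K^*\hookrightarrow\Ext^1_{\Lambda}(M/K,\Lambda)$, which gives $K^{**}=0$ and hence $K^*=0$; and factor modules for $j=2$ follow because $0=M^*\to K^*\to\Ext^1_{\Lambda}(M/K,\Lambda)\to\Ext^1_{\Lambda}(M,\Lambda)=0$ identifies $\Ext^1_{\Lambda}(M/K,\Lambda)$ with $K^*$, which then dies by the same two steps. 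But for a submodule $K$ of $M$ with $\grade M\geq 2$, the long exact sequence only yields an embedding $\Ext^1_{\Lambda}(K,\Lambda)\hookrightarrow\Ext^2_{\Lambda}(M/K,\Lambda)$, whence $\grade\Ext^1_{\Lambda}(K,\Lambda)\geq 2$; a module of large grade need not vanish, and the split-mono trick is unavailable because $\Ext^1_{\Lambda}(K,\Lambda)$ is not of the form $L^*$ (it is a cokernel of duals, hence not torsionless in general). So the step you flag as ``more delicate'' is genuinely missing. The repair is already in your hands and is exactly the mechanism you deploy in (3): having proved the $j=1$ case of (1) and part (2), every composition factor of a module of grade $\geq 2$ has grade $\geq 1$, hence grade $2$; since $\{M\mid\grade M\geq 2\}$ is extension-closed by the long exact sequence, any subquotient --- in particular any submodule --- then has grade $\geq 2$ by induction on length. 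With that insertion your argument is complete.
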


\begin{proof}
(1) is a straight result of \cite[Proposition 2.4]{I}.

(2) follows from the fact $\Ext_{\Lambda}^{i}(S,\Lambda)\simeq
\Hom_{\Lambda}(S,I_i(\Lambda))$ and $\Lambda$ is an Auslander
algebra.

(3) By the definition of Auslander algebra, $\Lambda$ is Auslander's
$2$-Gorenstein. Then by \cite{FGR} $\Lambda$ is Auslander's
$k$-Gorenstein if and only if for each submodule $X$ of
$\Ext_{\Lambda}^{i}(T,\Lambda)$ with $T$ in $\mod \Lambda$ and
$i\leq k$, we have ${\rm grade}X\geq i$. Then we have ${\rm grade}\
\Ext_{\Lambda}^{j}(T,\Lambda)\geq j$ for $j=1,2$. By (1) every
composition factor $S$ of $\Ext_{\Lambda}^1(T,\Lambda)$ has grade at
least $1$, and hence $2$ by (2). Then by an induction on the length
of $\Ext_{\Lambda}^1(T,\Lambda)$, we get ${\rm grade}
\Ext_{\Lambda}^{1}(T,\Lambda)\geq 2$.

 (4) is a direct result
of (1) and (3).
\end{proof}

In the following we recall some basic properties of $\tau$-rigid
 modules. We start
 with the following definition \cite{AIR}.

\begin{definition}\label{2.8} We call $M \in \mod\Lambda$ {\it
$\tau$-rigid} if ${\rm Hom}_{\Lambda}(M, \tau M) = 0$. In addition,
 $M$ is called {\it $\tau$-tilting} if $M$ is
$\tau$-rigid and $|M| = |\Lambda|$. Moreover, $M$ is called {\it
support $\tau$-tilting} if there exists an idempotent $e$ of
$\Lambda$ such that $M$ is a $\tau$-tilting $\Lambda/(e)$-module.
\end{definition}

It is clear that any $\tau$-rigid $\Lambda$-module $M$ is rigid,
that is, $\Ext_{\Lambda}^1(M,M)=0$. In general the converse is not
true. But if ${\rm pd}_{\Lambda}M=1$, then $M$ is $\tau$-rigid if
and only if $M$ is rigid. Recall that a $\Lambda$-module $T$ is
called a {\it (classical) tilting module} if $T$ satisfies (1)
$\mathrm{pd}_{\Lambda} T \leq 1$, (2)
$\mathrm{Ext}_{\Lambda}^{1}(T,T) = 0$ and ($3$) $|T|=|\Lambda|$. It
is showed in [AIR] that a tilting $\Lambda$-module is exactly a
faithful support $\tau$-tilting $\Lambda$-module.

To judge $\tau$-rigid modules of projective dimension $2$ over
Auslander algebras, we also need the following lemma in [AIR].

\begin{lemma}\label{2.9} Let $\Lambda$ be an algebra and $M$ a
$\Lambda$-module without projective direct summands. Then $M$ is
$\tau$-rigid in $\mod\Lambda$ if and only if ${\rm Tr}M$ is
$\tau$-rigid in $\mod\Lambda^{op}$.
\end{lemma}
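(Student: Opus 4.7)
The plan is to reduce both $\tau$-rigidity conditions to the vanishing of one and the same $K$-vector space, via Hom--tensor adjunction and the Auslander--Reiten identity $\tau N = D\,\Tr N$ (where $D = \Hom_K(-,K)$), which is valid whenever $N$ has no projective direct summands.

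First I would observe that since $M$ has no projective direct summands, $\tau M = D\,\Tr M$ in $\mod\Lambda$. Moreover, $\Tr$ induces a duality between the stable module categories of $\Lambda$ and $\Lambda^{\op}$: in particular $\Tr M$ also has no projective direct summands in $\mod\Lambda^{\op}$, and $\Tr\,\Tr M \cong M$ modulo projectives. Consequently $\tau_{\Lambda^{\op}}(\Tr M) = D\,\Tr_{\Lambda^{\op}}(\Tr M) \cong DM$ in $\mod\Lambda^{\op}$.

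Next I would apply Hom--tensor adjunction on each side. Viewing $\Tr M$ as a left $\Lambda$-module and $M$ as a right $\Lambda$-module yields
$$\Hom_\Lambda(M,\tau M) \;=\; \Hom_\Lambda(M, D\,\Tr M) \;\cong\; D(M \otimes_\Lambda \Tr M).$$
Symmetrically, viewing $\Tr M$ as a right $\Lambda^{\op}$-module and $M$ as a left $\Lambda^{\op}$-module yields
$$\Hom_{\Lambda^{\op}}(\Tr M, \tau_{\Lambda^{\op}}\Tr M) \;=\; \Hom_{\Lambda^{\op}}(\Tr M, DM) \;\cong\; D(\Tr M \otimes_{\Lambda^{\op}} M).$$
Since the $K$-spaces $M \otimes_\Lambda \Tr M$ and $\Tr M \otimes_{\Lambda^{\op}} M$ are canonically isomorphic via the standard swap, both Hom-spaces vanish simultaneously, giving the claimed equivalence.

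The only subtle point I anticipate is the bookkeeping of left/right module structures when invoking the adjunction, together with ensuring that the hypothesis ``$M$ has no projective direct summands'' is used exactly where needed --- namely to apply $\tau N = D\,\Tr N$ on both sides and to justify $\Tr\,\Tr M \cong M$ in the stable category.
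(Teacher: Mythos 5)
The paper does not prove this lemma at all --- it is quoted from [AIR] --- so there is no internal proof to compare against; I will just assess your argument on its own. It is correct. The identification $\Hom_\Lambda(M,\tau M)=\Hom_\Lambda(M,D\Tr M)\cong D(M\otimes_\Lambda\Tr M)$ is valid by tensor--hom adjunction, and note that $M\otimes_\Lambda\Tr M$ is exactly $\Coker\bigl(\Hom_\Lambda(P_0,M)\to\Hom_\Lambda(P_1,M)\bigr)$ for a minimal presentation $P_1\to P_0\to M\to 0$, so your computation recovers the standard surjectivity criterion for $\tau$-rigidity used in [AIR]; the symmetry of that criterion under $(-)^*$ is precisely your swap $M\otimes_\Lambda\Tr M\cong\Tr M\otimes_{\Lambda^{op}}M$. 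The only point worth tightening is the step $\Tr_{\Lambda^{op}}(\Tr M)\cong M$: you say ``modulo projectives,'' but you then use the isomorphism on the nose to identify $\tau_{\Lambda^{op}}(\Tr M)$ with $DM$. This is fine, but you should say explicitly that because $M$ has no projective direct summands and the presentation is minimal, the dualized presentation $P_0^*\to P_1^*\to\Tr M\to 0$ is again minimal, $\Tr M$ has no projective direct summands, and hence $\Tr\Tr M\cong M$ exactly --- this is where the hypothesis on $M$ enters, as you anticipated.
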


Recall that a morphism $f:M\rightarrow N$ is called {\it right
minimal}  (resp. {\it left minimal}) if $fg=f$ (resp. $gf=f$)
implies that $g$ is an isomorphism, where $g$ is a homomorphism of
the form $M\rightarrow M$ (resp. $N\rightarrow N$). The following
properties of right minimal (resp. left minimal) morphisms in
\cite{HuZ} are useful for the proof of the main results.
\begin{lemma}\label{2.a} Let $0\rightarrow A\stackrel{g}{\rightarrow}
B\stackrel{f}{\rightarrow} C\rightarrow0$ be a non-split exact
sequence in $\mod\Lambda$ with $B$ projective-injective. Then the
following are equivalent:
\begin{enumerate}[\rm(1)]
\item A is indecomposable and $g$ is left minimal.
\item C is indecomposable and $f$ is right minimal.

\end{enumerate}

\end{lemma}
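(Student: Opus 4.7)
The plan is to exploit self-duality of the statement. Applying $D=\Hom_k(-,k)$ to the given sequence yields a non-split short exact sequence $0\to DC\to DB\to DA\to 0$ in $\mod\Lambda^{\op}$ whose middle term is again projective-injective; since $D$ interchanges injectivity and projectivity, preserves indecomposability, and swaps left minimality with right minimality, conditions (1) and (2) are interchanged under $D$. It therefore suffices to prove $(1)\Rightarrow(2)$.

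Assume (1). To see that $f$ is right minimal, suppose instead $B=B_1\oplus B_2$ with $B_2\neq 0$ and $B_2\subseteq \Ker f=g(A)$. Then $B_2$ is injective (as a summand of the injective $B$) and $g$ restricts to an isomorphism $g^{-1}(B_2)\xrightarrow{\sim} B_2$, so $A$ contains a nonzero injective submodule; indecomposability of $A$ forces $A=g^{-1}(B_2)$, making $g$ a split monomorphism and the sequence split---a contradiction. To see that $C$ is indecomposable, suppose $C=C_1\oplus C_2$ with both $C_i\neq 0$ and set $B_i=f^{-1}(C_i)$. Then $B_1+B_2=B$ and $B_1\cap B_2=g(A)$, so the Mayer-Vietoris sequence
\begin{equation*}
0\to A\to B_1\oplus B_2\to B\to 0
\end{equation*}
is exact and, since $B$ is projective, splits, giving $B_1\oplus B_2\cong A\oplus B$. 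By Krull-Schmidt the indecomposable summands on the left are $A$ together with the indecomposable (projective-injective) summands of $B$. Either $A$ itself is projective-injective---in which case $g$ would already split the sequence, contradicting non-splitness---or $A$ occurs with multiplicity one and we may assume it is a direct summand of $B_1$, forcing $B_2$ to be a direct sum of summands of $B$, hence projective-injective. The inclusion $B_2\hookrightarrow B$ then splits with complement isomorphic to $B/B_2\cong B_1/(B_1\cap B_2)=C_1$, so $C_1$ is a projective summand of $B$; lifting the resulting split surjection $B\twoheadrightarrow C_1$ gives a decomposition $B=B'\oplus C_1$ with $g(A)\subseteq B'$. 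The endomorphism $h=\mathrm{id}_{B'}\oplus 0_{C_1}$ of $B$ then satisfies $hg=g$ but is not an isomorphism, contradicting left minimality of $g$.

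The main obstacle I anticipate is the $C$-indecomposable step: simpler attempts using only injectivity of $B$ and indecomposability of $A$ do not reach a contradiction, and one really needs the Mayer-Vietoris splitting together with Krull-Schmidt to identify $B_2$ itself as projective-injective, thereby producing a projective summand of $B$ inside $C$ against which the left minimality of $g$ can be deployed.
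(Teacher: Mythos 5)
Your proof is correct, and it is worth noting at the outset that the paper itself offers no proof of this lemma: it is quoted from the reference [HuZ] as a known property of minimal morphisms, so there is no in-paper argument to compare against. Your argument is a legitimate self-contained proof. The duality reduction is sound ($D$ sends the sequence to a non-split sequence over $\Lambda^{\op}$ with projective-injective middle term and interchanges the two conditions), so proving $(1)\Rightarrow(2)$ suffices. The right-minimality step is fine: a nonzero summand $B_2$ of $B$ inside $\Ker f=g(A)$ is injective, pulls back to a nonzero injective (hence direct) summand of the indecomposable $A$, forcing $A$ injective and the sequence split. The indecomposability step also checks out: with $B_i=f^{-1}(C_i)$ one has $B_1+B_2=B$ and $B_1\cap B_2=g(A)$, the Mayer--Vietoris sequence splits by projectivity of $B$, and Krull--Schmidt places $A$ (which cannot be projective-injective, else the sequence splits) inside, say, $B_1$, making $B_2$ projective-injective; since $g(A)=B_1\cap B_2\subseteq B_2$ and $B/B_2\cong C_1\neq 0$, the projection of $B=B_2\oplus W$ onto $B_2$ is a non-isomorphism $h$ with $hg=g$, contradicting left minimality. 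The one place where your wording is looser than the underlying argument is the phrase about ``lifting the split surjection $B\twoheadrightarrow C_1$''; what actually does the work is simply that $B_2$ is an injective submodule of $B$ containing $g(A)$ with nonzero quotient, but the conclusion you draw is exactly right. Altogether this is a complete elementary proof of a statement the paper only cites.
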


\section{Main results}

In this section we give the main results of this paper and some
examples to show the main results. Throughout this section,
$\Lambda={\rm End}_{R}A$ is the Auslander algebra of a
representation-finite algebra $R$ with an additive generator $A$.

\vspace{0.2cm}

It is showed by Igusa \cite{Ig} that $S$ is rigid for any simple
module $S$ over an algebra $\Gamma$ of finite global dimension.
However, we give a new direct proof for the rigidness of simple
modules whenever $\Gamma$ is an Auslander algebra.

\begin{proposition}\label{2.4}  Let $\Lambda$ be an Auslander algebra and $S$ a
simple $\Lambda$-module. Then $\Ext_{\Lambda}^{1}(S,S)=0$.
\end{proposition}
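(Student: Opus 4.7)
The plan is to read off $\Ext^1_\Lambda(S,S)$ directly from a minimal projective resolution of $S$, using the explicit form provided by Proposition \ref{2.2}, and then to check that the projective cover of $S$ cannot appear as a direct summand of the first syzygy. Writing $S = S_X$ for an indecomposable $R$-module $X$, I would split according to Proposition \ref{2.2}: if $X$ is projective in $\mod R$ the minimal projective resolution is $0 \to P_{\rad X} \to P_X \to S_X \to 0$, while if $X$ is non-projective it is $0 \to P_{\tau X} \to P_E \to P_X \to S_X \to 0$, coming from the almost split sequence $0 \to \tau X \to E \to X \to 0$.

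Minimality of each resolution means that every differential $P_{i+1} \to P_i$ lands in $\rad P_i$, so applying $\Hom_\Lambda(-, S_X)$ produces a complex in which all connecting maps factor through $\rad S_X = 0$ and therefore vanish. Consequently
\[ \Ext^1_\Lambda(S_X, S_X) \;\cong\; \Hom_\Lambda(P_1, S_X), \]
which is nonzero exactly when $P_X$ is a direct summand of $P_1$, equivalently when $X$ occurs as a summand of $\rad X$ (projective case) or of $E$ (non-projective case). The projective case is settled at once, since $\rad X$ is a proper submodule of $X$ and hence has strictly smaller $K$-dimension.

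The real content of the proposition is ruling out $X$ as a summand of the middle term $E$ in the non-projective case. I would proceed by contradiction: if $E = X \oplus E'$, then the $X$-component $\varphi \colon X \to X$ of the right map $E \to X$ cannot be an isomorphism (otherwise $X \xrightarrow{\varphi^{-1}} E$ would split the almost split sequence), so $\varphi$ lies in the nilpotent ideal $\rad \End_R(X)$. The main obstacle is to leverage this nilpotence, together with the left-minimality of $\tau X \to E$ and the indecomposability of $\tau X$, to perform an explicit change of basis on $E$ that triangularises the almost split sequence into a split one, contradicting its non-splitness. This amounts to a no-loops statement for the AR-quiver of the representation-finite algebra $R$; whereas the general form (Igusa's theorem, cited just above the proposition) holds for arbitrary finite global dimension, the Auslander-algebra setting should permit this direct elementary argument using only Proposition \ref{2.2} and basic endomorphism-ring manipulations.
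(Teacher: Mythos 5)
Your reduction is correct and is in fact the same as the paper's: by minimality of the resolutions supplied by Proposition \ref{2.2}, all differentials in $\Hom_{\Lambda}(P_\bullet,S_X)$ vanish, so $\Ext_{\Lambda}^{1}(S_X,S_X)\cong\Hom_{\Lambda}(P_1,S_X)$, and this is nonzero only if $P_X$ is a summand of $P_1$, i.e.\ only if $X$ is a summand of $\rad X$ (impossible by length) or of the middle term $E$ of the almost split sequence. The problem is that you stop exactly where the real content begins. You yourself label the exclusion of $X$ as a summand of $E$ ``the main obstacle,'' propose to handle it by a change of basis exploiting the nilpotence of the component $\varphi\colon X\to X$, and then assert that this ``should'' work. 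That is not a proof, and the route you sketch is doubtful as stated: from $\varphi\in\rad\End_R(X)$ and surjectivity of $(\varphi,\psi)\colon X\oplus E'\to X$ the almost-split property only yields a factorization $\varphi=\varphi a+\psi b$, and no contradiction falls out of nilpotence alone; you need the minimality of $E\to X$ in an essential way, and you never actually use it.

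The missing ingredient --- which is precisely how the paper closes the argument --- is the characterization of irreducible morphisms: since $E\to X$ is minimal right almost split, if $X$ were a direct summand of $E$ then the corresponding component $X\to X$ would be an irreducible morphism by \cite[IV, Theorem 1.10(b)]{AsSS}. But no irreducible endomorphism of an indecomposable finite-dimensional module can exist: an irreducible morphism is a proper monomorphism or a proper epimorphism, while a mono or epi endomorphism of a finite-length module is an isomorphism. This is the ``no loops in the AR-quiver'' statement you correctly identified as the crux, but identifying it is not the same as proving it. With this one citation (or a short direct proof of irreducibility of components of minimal right almost split maps, which is the rigorous version of the triangularisation you were groping for), your argument becomes complete and coincides with the paper's.
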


\begin{proof}

For a simple $\Lambda$-module $S$, we show the assertion by using
the projective dimension of $S$.

If ${\rm pd}_{\Lambda}S=0$, there is nothing to show.

If ${\rm pd}_{\Lambda}S=1$, then we can get a minimal projective
resolution $0\rightarrow P_1(S)\rightarrow P_0(S)\rightarrow
S\rightarrow 0$. Then the length of $P_1(S)$ is smaller than that of
$P_0(S)$, and hence ${\rm Hom}_{\Lambda}(P_1(S),S)=0$. So one gets
$\Ext_{\Lambda}^1(S,S)\simeq {\rm Hom}_{\Lambda}(P_1(S),S)=0$.

If ${\rm pd}_{\Lambda}S=2$, then by Proposition 2.2, there is an
$AR$-sequence $0\rightarrow \tau X\rightarrow E\rightarrow
X\rightarrow 0$ in $\mod R$ such that $0\rightarrow {\rm
Hom}_{R}(A,\tau X)\rightarrow {\rm Hom}_{R}(A,E)\rightarrow {\rm
Hom}_{R}(A,X)\rightarrow S\rightarrow 0$ is a minimal projective
resolution of $S$. On the contrary, suppose that ${\rm
Ext}_{\Lambda}^{1}(S,S)\not=0$, then we get that ${\rm
Hom}_{\Lambda}(P_1(S),S)\simeq \Ext_{\Lambda}^{1}(S,S)\neq 0$. So
$P_0(S)= {\rm Hom_{R}(A,X)}$ is a direct summand of $P_1(S)={\rm
Hom}_{\Lambda}(A,E)$. Note that the functor ${\rm
Hom}_{\Lambda}(A,-)$ induces an equivalence from ${\rm add} A$ to
${\rm add}\Lambda$, then $X$ is a direct summand of $E$. Since
$E\rightarrow X$ is right almost split, then we get an irreducible
morphism $f:X\rightarrow X$ by \cite[IV, Theorem 1.10(b)]{AsSS}, a
contradiction.
\end{proof}

Denote by $(-)^*$ the functor ${\rm Hom}_{\Lambda}(-,\Lambda)$, then
we have the following lemma \cite{IZ} with a different shorter
proof.

\begin{lemma}\label{2.5} Let $\Lambda$ be an Auslander algebra, and let
$M$ be a $\Lambda$-module with $\pd_{\Lambda}M \leq 1$. Then the
canonical map $M \stackrel{\varphi_{M}}{\rightarrow}M^{\ast \ast}$
is injective, and the projective dimension of any composition factor
of $M^{\ast \ast} / M$ is $2$.
\end{lemma}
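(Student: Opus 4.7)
The plan is to exploit the classical Auslander--Bridger four-term exact sequence
\[
0 \to \Ext_{\Lambda}^{1}(\Tr M,\Lambda) \to M \xrightarrow{\varphi_{M}} M^{\ast\ast} \to \Ext_{\Lambda}^{2}(\Tr M,\Lambda) \to 0,
\]
and to identify $\Tr M$ concretely using the hypothesis $\pd_{\Lambda} M \leq 1$.

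First, I would take a minimal projective presentation $P_{1} \to P_{0} \to M \to 0$. Since $\pd_{\Lambda} M \leq 1$, this is in fact a short exact sequence, and applying $(-)^{\ast}$ yields the exact sequence
\[
0 \to M^{\ast} \to P_{0}^{\ast} \to P_{1}^{\ast} \to \Ext_{\Lambda}^{1}(M,\Lambda) \to 0,
\]
using that $P_{i}^{\ast}$ is projective and $\Ext_{\Lambda}^{1}(P_{0},\Lambda)=0$. By definition, the cokernel of $P_{0}^{\ast} \to P_{1}^{\ast}$ is $\Tr M$, so we obtain the identification $\Tr M \cong \Ext_{\Lambda}^{1}(M,\Lambda)$.

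Next I would plug this into the Auslander--Bridger sequence above. By Lemma \ref{2.3}(3) applied with $T=M$ and $j=1$, we have $\mathrm{grade}\,\Ext_{\Lambda}^{1}(M,\Lambda) \geq 2$, i.e.\ $\mathrm{grade}\,\Tr M \geq 2$. In particular $\Ext_{\Lambda}^{1}(\Tr M,\Lambda)=0$, so the canonical map $\varphi_{M}$ is injective, giving the first assertion. For the second assertion, the same sequence identifies the cokernel as
\[
M^{\ast\ast}/M \;\cong\; \Ext_{\Lambda}^{2}(\Tr M,\Lambda) \;\cong\; \Ext_{\Lambda}^{2}\bigl(\Ext_{\Lambda}^{1}(M,\Lambda),\,\Lambda\bigr),
\]
and Lemma \ref{2.3}(4) (applied with $T = \Ext_{\Lambda}^{1}(M,\Lambda)$, viewed as a module via a suitable presentation, or more directly reading off Lemma \ref{2.3}(4) for the $\Ext^{2}$-term appearing as $M^{\ast\ast}/M$) says that every composition factor of this module has projective dimension $2$.

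The only subtlety I anticipate is being careful that the identification $\Tr M \cong \Ext_{\Lambda}^{1}(M,\Lambda)$ (which is clean because $\pd_{\Lambda} M \leq 1$ kills any spurious projective summand up to the equivalence in the stable category) is used consistently, and that Lemma \ref{2.3}(4) is invoked for a module of the form $\Ext_{\Lambda}^{2}(-,\Lambda)$; both of these are routine bookkeeping given the tools already established, so I expect no serious obstacle beyond citing the Auslander--Bridger sequence.
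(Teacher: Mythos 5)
Your proof is correct and follows essentially the same route as the paper: the Auslander--Bridger four-term exact sequence, the identification $\Tr M \cong \Ext_{\Lambda}^{1}(M,\Lambda)$ coming from $\pd_{\Lambda}M \leq 1$, Lemma \ref{2.3}(3) to get $\mathrm{grade}\,\Tr M \geq 2$ and hence injectivity of $\varphi_M$, and Lemma \ref{2.3}(4) for the composition factors of $M^{\ast\ast}/M = \Ext_{\Lambda^{op}}^{2}(\Tr M,\Lambda)$ (the paper's text cites part (2) here, but part (4) is what is actually needed, so your citation is the right one). The only cosmetic point is to keep track that $\Tr M$ lives in $\mod\Lambda^{op}$, so the relevant $\Ext$ groups in the Auslander--Bridger sequence are over $\Lambda^{op}$, which is again an Auslander algebra.
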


\begin{proof}
By \cite{AuB}, we get an exact sequence $0\rightarrow {\rm
Ext}_{\Lambda^{op}}^1({\rm Tr} M, \Lambda)\rightarrow M\rightarrow
M^{**}\rightarrow \Ext_{\Lambda^{op}}^2({\rm Tr} M,
\Lambda)\rightarrow 0 $. To show the former assertion, it suffices
to show that $\mathrm{Ext}_{\Lambda^{op}}^{1} \left( \mathrm{Tr}M ,
\Lambda \right) = 0$. In the following we show ${\rm grade}\Tr M=2$.
Since
 ${\rm pd}_{\Lambda}M\leq 1$, then one gets ${\rm Tr} M\simeq {\rm
 Ext}_{\Lambda}^1(M,\Lambda)$ and hence by Lemma 2.3(3), ${\rm grade\ Tr }M\geq
 2$ holds, and hence $\Ext_{\Lambda^{op}}^1(\Tr M,\Lambda)=0$  We get the desired injection.
 Then by Lemma 2.3(2), the later assertion holds.
\end{proof}

 Now we are in a position to state the following main result on the
 ($\tau$-)rigidness of modules with projective dimension $1$.

\begin{theorem}\label{2.6} Let $\Lambda$ be an Auslander algebra and
$M$ a $\Lambda$-module with ${\rm pd}_{\Lambda}M=1$.
 Then $\Ext_{\Lambda}^{1}(M,M)=0$ if and only if ${\rm
Ext}_{\Lambda}^{2}(N,M)=0$ holds for $N=M^{**}/M$.
\end{theorem}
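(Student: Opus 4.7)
The plan is to identify $\Ext^2_\Lambda(N,M)$ explicitly with the tensor product $M\otimes_\Lambda\Tr M$, and then to invoke the Auslander-Reiten formula to rewrite this as $D\Hom_\Lambda(M,\tau M)$. Since the hypothesis $\pd_\Lambda M=1$ forces rigidity and $\tau$-rigidity to coincide, the desired equivalence will drop out.

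First, I would build a projective resolution of $N$ starting from one of $\Tr M$. The proof of Lemma \ref{2.5} already yields $N\cong \Ext^2_{\Lambda^{\op}}(\Tr M,\Lambda)$ together with $\grade_{\Lambda^{\op}}\Tr M\ge 2$, so that $(\Tr M)^{\ast}=0=\Ext^1_{\Lambda^{\op}}(\Tr M,\Lambda)$. I would pick a minimal projective resolution
\[
0\to Q_2\to Q_1\to Q_0\to \Tr M\to 0
\]
in $\mod\Lambda^{\op}$ (of length at most $2$, since $\gldim\Lambda\le 2$) and apply $(-)^{\ast}=\Hom_{\Lambda^{\op}}(-,\Lambda)$. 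The vanishing of the two lowest Ext groups converts the resulting cochain complex into the exact sequence
\[
0\to Q_0^{\ast}\to Q_1^{\ast}\to Q_2^{\ast}\to N\to 0,
\]
which is a projective resolution of $N$ in $\mod\Lambda$, since $(-)^{\ast}$ sends projective $\Lambda^{\op}$-modules to projective right $\Lambda$-modules.

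Next, I would apply $\Hom_\Lambda(-,M)$ to this resolution and use the natural isomorphism $\Hom_\Lambda(Q^{\ast},M)\cong M\otimes_\Lambda Q$ for any finitely generated projective left $\Lambda$-module $Q$. The relevant portion of the complex then becomes $M\otimes_\Lambda Q_1\to M\otimes_\Lambda Q_0$, and the right exactness of $M\otimes_\Lambda-$ identifies its cokernel with $M\otimes_\Lambda\Tr M$, giving
\[
\Ext^2_\Lambda(N,M)\;\cong\; M\otimes_\Lambda\Tr M.
\]
The Hom-tensor adjunction $\Hom_\Lambda(M,\Hom_K(\Tr M,K))\cong\Hom_K(M\otimes_\Lambda\Tr M,K)$ rewrites this, with $D=\Hom_K(-,K)$ and $\tau M=D\Tr M$, as
\[
\Ext^2_\Lambda(N,M)\;\cong\; D\Hom_\Lambda(M,\tau M).
\]
Hence $\Ext^2_\Lambda(N,M)=0$ if and only if $\Hom_\Lambda(M,\tau M)=0$, i.e., $M$ is $\tau$-rigid. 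Since $\pd_\Lambda M=1$, the remark preceding Lemma \ref{2.9} identifies $\tau$-rigidity with $\Ext^1_\Lambda(M,M)=0$, closing the equivalence.

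The hardest step will be the dualisation: using Lemma \ref{2.3}(3) (the $2$-Gorenstein property of $\Lambda$) to pass from a projective resolution of $\Tr M$ in $\mod\Lambda^{\op}$ to a projective resolution of $N$ in $\mod\Lambda$. Once that is in hand, the identification $\Ext^2_\Lambda(N,M)\cong M\otimes_\Lambda\Tr M$ is formal, and the remainder is Auslander-Reiten duality together with the standard rigid-equals-$\tau$-rigid equivalence in projective dimension one.
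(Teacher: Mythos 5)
Your proof is correct, but it follows a genuinely different route from the paper's. The paper's argument is a short dimension shift: it first notes that $M^{*}$ is a kernel of a map between projectives in $\mod\Lambda^{\op}$, hence projective since $\gldim\Lambda\le 2$, so $M^{**}$ is projective; applying $\Hom_{\Lambda}(-,M)$ to the exact sequence $0\to M\to M^{**}\to N\to 0$ of Lemma \ref{2.5} then gives $\Ext^{1}_{\Lambda}(M,M)\cong\Ext^{2}_{\Lambda}(N,M)$ directly, with no mention of $\tau$ or the transpose. You instead manufacture a projective resolution of $N$ by dualising one of $\Tr M$ (legitimate, since $\grade_{\Lambda^{\op}}\Tr M\ge 2$ kills the two lowest cohomology groups of the dualised complex), identify $\Ext^{2}_{\Lambda}(N,M)\cong M\otimes_{\Lambda}\Tr M\cong D\Hom_{\Lambda}(M,\tau M)$, and close the loop with the equivalence of rigidity and $\tau$-rigidity in projective dimension one. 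All of your steps check out --- the exactness of $0\to Q_0^{*}\to Q_1^{*}\to Q_2^{*}\to N\to 0$, the cokernel identification via right exactness of $M\otimes_{\Lambda}-$, and the adjunction $D(M\otimes_{\Lambda}\Tr M)\cong\Hom_{\Lambda}(M,D\Tr M)$ --- provided you use the minimal transpose throughout, so that $\Tr M$ has no projective summands, its grade is actually $2$, and $D\Tr M$ really is $\tau M$. What your approach buys is the stronger intermediate isomorphism $\Ext^{2}_{\Lambda}(N,M)\cong D\Hom_{\Lambda}(M,\tau M)$, which links the theorem to $\tau$-rigidity directly; what it costs is length and a dependence on the standard fact (stated but not proved in the paper, before Lemma \ref{2.9}) that rigid equals $\tau$-rigid in projective dimension one, which the paper's two-step proof never needs.
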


\begin{proof}
We show the assertion step by step.

 (1) For any $M\in {\rm
 mod}\Lambda$, $M^*$ is projective. Here we only
 need the condition ${\rm gl.dim}\Lambda=2.$

 Let $P_{1}(M) \rightarrow P_{0}(M) \rightarrow M \rightarrow
0$ be a projective resolution of $M$. Applying the functor $\left( -
\right)^{\ast}$, we get an exact sequence
 $ 0 \rightarrow M^{\ast} \rightarrow P_{0}(M)^{\ast}
  \rightarrow P_{1}(M)^{\ast}$. Since
$\mathrm{gl.dim} \Lambda \leq 2$, one gets that $M^{\ast}$ is a
projective $\Lambda^{op}$-module. Thus $M^{\ast \ast}$ is a
projective $\Lambda$-module.

(2) $\Ext_{\Lambda}^1(M,M)\simeq \Ext_{\Lambda}^2(M^{**}/M,M)$
holds.

 By Lemma \ref{2.5}, we get the exact
sequence $0\rightarrow M\rightarrow M^{**}\rightarrow {\rm
Ext}_{\Lambda^{op}}^{2}({\rm Tr}M, \Lambda)(=M^{**}/M)\rightarrow0$.
Applying the functor ${\rm Hom}_{\Lambda}(-,M)$ to the exact
sequence, we get the desired isomorphism since $M^{**}$ is
projective by (1).
\end{proof}

Immediately, we have the following corollary.

\begin{corollary}\label{3.4} Let $\Lambda$ be an Auslander algebra and
$M$ a $\Lambda$-module with ${\rm pd}_{\Lambda}M=1$.
\begin{enumerate}[\rm(1)]
\item If ${\rm id}_{\Lambda}M=1$, then $\Ext_{\Lambda}^{1}(M,M)=0$ holds.
\item If $\Ext_{\Lambda}^{2}(S',M)=0$ holds for any composition factor
$S'$ of $M^{**}/M$, then $\Ext_{\Lambda}^{1}(M,M)=0$ holds.
\end{enumerate}
 \end{corollary}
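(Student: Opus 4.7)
The plan is to deduce both assertions directly from Theorem \ref{2.6}, which reduces the verification of $\Ext_{\Lambda}^{1}(M,M)=0$ to the single condition $\Ext_{\Lambda}^{2}(N,M)=0$ with $N=M^{\ast\ast}/M$. So in both parts the strategy is the same: check this Ext-vanishing on $N$, then invoke Theorem \ref{2.6}.

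For (1), the hypothesis $\id_{\Lambda}M=1$ yields $\Ext_{\Lambda}^{i}(X,M)=0$ for every $X\in\mod\Lambda$ and every $i\geq 2$. Specializing to $X=N$ gives $\Ext_{\Lambda}^{2}(N,M)=0$, and Theorem \ref{2.6} concludes. This part is essentially a one-line application.

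For (2), the idea is to bootstrap the per-composition-factor vanishing up to vanishing on $N$ itself by induction on the composition length of $N$. Since $N$ is finitely generated (as a quotient of the finitely generated module $M^{\ast\ast}$), it has finite length; fix a composition series $0=N_{0}\subsetneq N_{1}\subsetneq\cdots\subsetneq N_{\ell}=N$ with simple subquotients $S'_{i}=N_{i}/N_{i-1}$, each of which is a composition factor of $N$ and hence satisfies $\Ext_{\Lambda}^{2}(S'_{i},M)=0$ by hypothesis. Applying $\Hom_{\Lambda}(-,M)$ to the short exact sequence $0\to N_{i-1}\to N_{i}\to S'_{i}\to 0$ produces a long exact sequence whose relevant portion reads
\[
\Ext_{\Lambda}^{2}(S'_{i},M)\longrightarrow \Ext_{\Lambda}^{2}(N_{i},M)\longrightarrow \Ext_{\Lambda}^{2}(N_{i-1},M).
\]
The base case $i=1$ is immediate since $N_{1}=S'_{1}$; for the inductive step the outer terms vanish (by hypothesis on the left, by induction on the right), forcing $\Ext_{\Lambda}^{2}(N_{i},M)=0$. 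Taking $i=\ell$ yields $\Ext_{\Lambda}^{2}(N,M)=0$, and Theorem \ref{2.6} finishes the proof.

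There is essentially no obstacle here; the only very mild care needed is in the inductive step of (2), where one might worry about an additional $\Ext_{\Lambda}^{3}$ term governing the connecting map, but this is automatic since $\gldim\Lambda\leq 2$ makes all higher Ext-groups vanish.
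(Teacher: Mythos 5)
Your proposal is correct and follows exactly the paper's (very terse) proof: part (1) is a direct application of Theorem \ref{2.6} via the vanishing of $\Ext_{\Lambda}^{2}(-,M)$ forced by $\id_{\Lambda}M=1$, and part (2) is the induction on the composition length of $M^{\ast\ast}/M$ that the paper merely indicates. You have simply written out the details the paper leaves to the reader.
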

\begin{proof}
(1) follows from Theorem \ref{2.6} directly. By induction on the
length of $M^{**}/M$, one can get the assertion (2).
\end{proof}

\begin{remark} We should remark that the converse of Corollary
\ref{3.4} are not true in general (see Example \ref{3.9}(5)).

\end{remark}

Denote by ${\rm i}\tau$-${\rm rig}\Lambda$ the set of isomorphism
classes of indecomposable $\tau$-rigid $\Lambda$-modules. Similarly,
one can define ${\rm i}\tau$-${\rm rig}\Lambda^{op}$. Denote by
 $\mathcal{G}$ the subset of ${\rm i}\tau$-${\rm rig}\Lambda$
consisting of isomorphism classes of $\tau$-rigid modules of grade
$2$ and denote by $\mathcal{S}$ the subset of ${\rm i}\tau$-${\rm
rig}\Lambda^{op}$ consisting of isomorphism classes of
non-projective $\tau$-rigid submodules of $\add\Lambda^{op}$.
 To judge $\tau$-rigid modules of projective dimension 2 over Auslander algebras, we need the following proposition.
\begin{proposition}\label{2.10} Let $\Lambda$ be an algebra of global
dimension 2. There is a bijection between $\mathcal{G}$ and
$\mathcal{S}$ via ${\rm Tr}: M\mapsto {\rm Tr M}$.
\end{proposition}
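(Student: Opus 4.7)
The plan is to verify that $\Tr$ gives a well-defined map in each direction between $\mathcal{G}$ and $\mathcal{S}$, and then use the standard fact that $\Tr\Tr$ is the identity on indecomposable non-projective modules to deduce that these two maps are mutually inverse.

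For the forward direction, take $M\in\mathcal{G}$ indecomposable $\tau$-rigid with $\grade M=2$, so $M^{*}=0$ and $\Ext_{\Lambda}^{1}(M,\Lambda)=0$. These conditions force $M$ to be non-projective and $\pd_{\Lambda}M=2$: a length-$\leq 1$ resolution would dualize, via reflexivity of projectives, to give $M=0$. Choose a minimal projective resolution $0\to P_{2}\to P_{1}\to P_{0}\to M\to 0$, split it as $0\to \Omega M\to P_{0}\to M\to 0$ and $0\to P_{2}\to P_{1}\to \Omega M\to 0$, and dualize. The grade-two hypothesis gives $P_{0}^{*}\cong (\Omega M)^{*}$ together with $(\Omega M)^{*}=\Ker(P_{1}^{*}\to P_{2}^{*})$, whence
\[
\Tr M=\Coker(P_{0}^{*}\to P_{1}^{*})\cong \Im(P_{1}^{*}\to P_{2}^{*})\subseteq P_{2}^{*}\in \add\Lambda^{\op}.
\]
Both indecomposability and non-projectivity of $\Tr M$ follow from those of $M$, and Lemma \ref{2.9} shows that $\Tr M$ is $\tau$-rigid. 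Hence $\Tr M\in\mathcal{S}$.

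For the reverse direction, let $N\in\mathcal{S}$ with an embedding $\iota\colon N\hookrightarrow Q$, $Q\in\add\Lambda^{\op}$. The short exact sequence $0\to N\to Q\to Q/N\to 0$ together with $\gldim\Lambda=2$ forces $\pd_{\Lambda^{\op}}N\leq 1$, and non-projectivity of $N$ upgrades this to $\pd_{\Lambda^{\op}}N=1$. Taking the minimal projective resolution $0\to P_{1}\to P_{0}\to N\to 0$ and dualizing the presentation $P_{0}^{*}\to P_{1}^{*}\to \Tr N\to 0$ yields $(\Tr N)^{*}=\Ker(P_{1}\to P_{0})=0$. Naturality of $\varphi$ applied to $\iota$ together with reflexivity of $Q$ makes $\varphi_{N}$ injective, so the Auslander--Bridger exact sequence
\[
0\to \Ext_{\Lambda}^{1}(\Tr N,\Lambda)\to N\xrightarrow{\varphi_{N}} N^{**}\to \Ext_{\Lambda}^{2}(\Tr N,\Lambda)\to 0
\]
gives $\Ext_{\Lambda}^{1}(\Tr N,\Lambda)=0$. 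Therefore $\grade\Tr N\geq 2$; since $\Tr N\neq 0$ and $\gldim\Lambda=2$, the grade is exactly $2$, and Lemma \ref{2.9} shows $\Tr N$ is $\tau$-rigid, so $\Tr N\in\mathcal{G}$.

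Finally, $\Tr\Tr=\id$ on indecomposable non-projective modules makes the two maps mutually inverse, giving the claimed bijection. I expect the forward direction to be the main obstacle: carefully tracking how the grade-two vanishing identifies $\Tr M$ with a concrete submodule of $P_{2}^{*}$ via two successive dualizations of the length-two resolution is where the computation is delicate. The reverse direction is largely routine once torsionlessness has been converted to $\pd N\leq 1$.
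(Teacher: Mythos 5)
Your proof is correct and follows essentially the same route as the paper: dualizing the length-two resolution and using the grade-two vanishing to realize $\Tr M$ inside $P_2(M)^{*}$ in one direction, and using torsionlessness, $(\Tr N)^{*}=0$, and the Auslander--Bridger sequence in the other, with Lemma \ref{2.9} handling $\tau$-rigidity. The only (welcome) addition is that you make explicit the $\Tr\Tr=\mathrm{id}$ step showing the two maps are mutually inverse, which the paper leaves implicit.
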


\begin{proof}

By Lemma \ref{2.9} $M$ is $\tau$-rigid if and only if $\Tr M$ is
$\tau$-rigid. Now it suffices to show that (a) $M\in\mathcal{G}$
implies that $\Tr M\in\mathcal{S}$ and (b) $M\in\mathcal{S}$ implies
that $\Tr M\in\mathcal{G}$.

(a) Since $M\in\mathcal{G}$, take the following minimal projective
resolution of $M$: $\cdots\rightarrow P_1(M)\rightarrow
P_0(M)\rightarrow M\rightarrow 0$. Applying the functor
$(-)^{\ast}$, we get an exact sequence
\begin{equation}\label {2.11.1}
0=M^{\ast}\rightarrow P_0(M)^{\ast}\rightarrow
P_1(M)^{\ast}\rightarrow \Tr M\rightarrow 0,
\end{equation}
which is a minimal projective resolution of $\Tr M $. Then
$\pd_{\Lambda}\Tr M=1$. On the other hand, since ${\rm grade}M=2$,
one gets the following sequences

\begin{equation}\label {2.11.2}0=M^{\ast}\rightarrow
P_0(M)^{\ast}\rightarrow \Omega^1M^{\ast} \rightarrow
\Ext_{\Lambda}^{1}(M,\Lambda)=0.
\end{equation}
and
\begin{equation}\label {2.11.3}
 0\rightarrow \Omega^1M^{\ast}\rightarrow
P_1(M)^{\ast}\rightarrow P_2(M)^{\ast}
\end{equation}

 Comparing exact sequences \ref{2.11.1} with \ref{2.11.2} and
 \ref{2.11.3}, one gets that $\Tr M$ is a submodule of
 $P_2(M)^{\ast}$.

(b) Since $M\in \mathcal{S}$ is non-projective and ${\rm
gl.dim}\Lambda=2$, then $\pd_{\Lambda}M=1$. Take a minimal
projective resolution of $M$: $0\rightarrow P_1(M)\rightarrow
P_0(M)\rightarrow M\rightarrow0$. Applying $(-)^{\ast}$, we get the
following exact sequence $0\rightarrow M^{\ast} \rightarrow
P_0(M)^{\ast}\rightarrow P_1(M)^{\ast}\rightarrow \Tr M\rightarrow 0
$. Note that $\Tr$ is a duality and $\pd_{\Lambda}M=1$, one gets
that $\Hom_{\Lambda^{op}}(\Tr M, \Lambda)=0$. Since $M$ can be
embedded into a projective module, then $M$ is torsionless, that is
$M\rightarrow M^{\ast\ast}$ is injective. By \cite{AuB} there is an
exact sequence $0\rightarrow\Ext_{\Lambda^{op}}^{1}(\Tr
M,\Lambda)\rightarrow M\rightarrow
M^{\ast\ast}\rightarrow\Ext_{\Lambda^{op}}^{2}(\Tr
M,\Lambda)\rightarrow0$ which implies that
$\Ext_{\Lambda^{op}}^{1}(\Tr M,\Lambda)=0$. Then {\rm grade}$\Tr
M=2$.
\end{proof}
As a corollary, we get the following.

\begin{corollary}\label{2.b}
Let $\Lambda$ be an Auslander algebra and $M\in\mod\Lambda$. If $M$
is of {\rm grade} $2$, then $M$ is $\tau$-rigid if and only if $\Tr
M$ is $\tau$-rigid with $\pd_{\Lambda}\Tr M=1$ in
$\mod\Lambda^{op}$.
\end{corollary}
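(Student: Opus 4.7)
My plan is to read this essentially as a reformulation of Proposition~\ref{2.10} (with the indecomposability hypothesis lifted), combined with Lemma~\ref{2.9}. Since $\Lambda$ is an Auslander algebra we have $\gldim\Lambda\le 2$, so the global-dimension hypothesis of Proposition~\ref{2.10} is satisfied, and all the ingredients from its proof are available. The structural observation doing the work is that the grade~$2$ hypothesis on $M$ forces $M^{\ast}=\Hom_{\Lambda}(M,\Lambda)=0$; hence $M$ cannot have any projective direct summand (the identity on such a summand would produce a nonzero element of $M^{\ast}$). I would record this first.

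Having $M$ without projective summands, I would then invoke Lemma~\ref{2.9} directly: $M$ is $\tau$-rigid in $\mod\Lambda$ if and only if $\Tr M$ is $\tau$-rigid in $\mod\Lambda^{\op}$. This single application disposes of both directions of the $\tau$-rigidity equivalence, and is independent of the projective dimension statement.

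For the projective-dimension clause, I would mimic the dualization argument from the proof of Proposition~\ref{2.10}(a): take a minimal projective resolution $P_{2}\to P_{1}\to P_{0}\to M\to 0$ (it has length at most two because $\gldim\Lambda\le 2$) and apply $(-)^{\ast}$. Since $\grade M=2$ means $M^{\ast}=0$ and $\Ext_{\Lambda}^{1}(M,\Lambda)=0$, the dualized complex collapses to a short exact sequence
\[
0\longrightarrow P_{0}^{\ast}\longrightarrow P_{1}^{\ast}\longrightarrow \Tr M\longrightarrow 0,
\]
giving $\pd_{\Lambda^{\op}}\Tr M\le 1$. To upgrade to equality I use that $M$ has no projective summands, so $\Tr\Tr M\cong M\ne 0$; were $\Tr M$ projective, $\Tr\Tr M$ would vanish, a contradiction. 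Hence $\pd_{\Lambda^{\op}}\Tr M=1$. This finishes the forward direction; for the converse the $\pd$ hypothesis is in fact not needed, since Lemma~\ref{2.9} already delivers $\tau$-rigidity of $M$ from that of $\Tr M$ once one knows $M$ has no projective summands.

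I do not expect a genuine obstacle here: the statement is a packaging of Proposition~\ref{2.10} without the indecomposability restriction, and the only point to be slightly careful about is verifying that Lemma~\ref{2.9}'s ``no projective summand'' hypothesis is automatic under $\grade M=2$, which is the observation highlighted above.
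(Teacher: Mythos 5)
Your proof is correct, and it uses the same core ingredients as the paper (Lemma \ref{2.9} plus dualizing a minimal projective resolution as in Proposition \ref{2.10}(a)), but it assembles them differently. The paper simply cites Proposition \ref{2.10} and then translates the condition ``non-projective submodule of a projective'' into ``$\pd=1$'': one direction from $\gldim\Lambda\le 2$, the other from Lemma \ref{2.5} (a module of projective dimension at most one embeds into its double dual, which is projective). You instead unpack Proposition \ref{2.10}: you note that $\grade M=2$ forces $M^{\ast}=0$, hence $M$ has no projective direct summands, so Lemma \ref{2.9} alone gives the $\tau$-rigidity equivalence in both directions; and you get $\pd_{\Lambda^{\op}}\Tr M\le 1$ directly from the collapsed dual sequence $0\to P_0^{\ast}\to P_1^{\ast}\to \Tr M\to 0$, with non-projectivity of $\Tr M$ coming from $\Tr\Tr M\cong M\ne 0$. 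This buys two small things: it handles decomposable $M$ directly (Proposition \ref{2.10} is literally stated only for indecomposables, so the paper's reduction is slightly informal on this point), and it makes explicit that $\pd_{\Lambda^{\op}}\Tr M=1$ is automatic from $\grade M=2$, so the projective-dimension clause carries no content in the ``if'' direction. The paper's route is shorter on the page but leans on Lemma \ref{2.5}, which your argument avoids entirely.
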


\begin{proof}
By Proposition \ref{2.10}, it is enough to show that
$\pd_{\Lambda}M=1$ if and only if $M$ can be embedded into a
projective module. Since ${\rm gl.dim}\Lambda=2$, one gets that $M$
can be embedded into a projective module implies that
$\pd_{\Lambda}M=1$. The converse follows from Lemma \ref{2.5}.
\end{proof}

\vspace{0.2cm}

Recall that from \cite{DIJ} that an algebra $\Lambda$ is called
{$\tau$-tilting finite} if there are finite number of non-isomorphic
indecomposable $\tau$-rigid modules in $\mod\Lambda$. It is clear
that a $\tau$-tilting finite algebra admits finite number of tilting
$\Lambda$-modules and tilting $\Lambda^{op}$-modules. To find a way
from two-sided tilting finite to $\tau$-tilting finite, we have the
following.

\begin{theorem}\label{2.11} Let $\Lambda$ be an algebra of global
dimension 2 admitting finite number of basic tilting
$\Lambda$-modules and tilting $\Lambda^{op}$-modules. If all
indecomposable $\tau$-rigid modules $M$ with ${\rm pd}_{\Lambda}M=2$
are of ${\rm grade}\ 2$, then $\Lambda$ is $\tau$-tilting finite.
\end{theorem}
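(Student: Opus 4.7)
The plan is to stratify the set of indecomposable $\tau$-rigid $\Lambda$-modules by projective dimension. Since $\gldim\Lambda=2$, each such module has projective dimension $0$, $1$, or $2$, and it suffices to bound each stratum separately. The hypothesis on basic tilting $\Lambda$-modules will handle the first two strata, while Proposition \ref{2.10} together with the hypothesis on basic tilting $\Lambda^{op}$-modules will handle the third.

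For $\pd_{\Lambda}M=0$ there are exactly $|\Lambda|$ indecomposable projectives. For $\pd_{\Lambda}M=1$, the remark after Definition \ref{2.8} says that $\tau$-rigidity and rigidity coincide in this range, so $M$ is a classical partial tilting module. Bongartz's completion then realises $M$ as an indecomposable summand of some basic classical tilting $\Lambda$-module; since by hypothesis there are only finitely many such basic tilting modules, each with at most $|\Lambda|$ indecomposable summands, the second stratum is finite.

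For $\pd_{\Lambda}M=2$ the hypothesis of the theorem gives $\grade M=2$, so $M\in\mathcal{G}$, and Proposition \ref{2.10} produces $\Tr M\in\mathcal{S}$; in particular $\Tr M$ is a non-projective $\tau$-rigid $\Lambda^{op}$-module embedded into some projective $P$. From the short exact sequence $0\to\Tr M\to P\to P/\Tr M\to 0$, together with $\gldim\Lambda^{op}=2$, one obtains $\pd_{\Lambda^{op}}\Tr M\leq 1$. Hence $\Tr M$ is rigid of projective dimension at most one, i.e., a classical partial tilting $\Lambda^{op}$-module, and Bongartz's completion makes it a summand of a basic tilting $\Lambda^{op}$-module. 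By hypothesis there are only finitely many such, and since $M\mapsto\Tr M$ is a bijection by Proposition \ref{2.10}, the third stratum is finite as well. Summing the three bounds yields $\tau$-tilting finiteness of $\Lambda$.

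The step that I expect to need the most care is the reduction on the $\pd=2$ stratum from $\Lambda$ to $\Lambda^{op}$: once the grade-$2$ hypothesis lets us invoke Proposition \ref{2.10} and land $\Tr M$ inside a projective, the observation that a torsionless module over a ring of global dimension $2$ has projective dimension at most $1$ is what makes $\Tr M$ a partial tilting module on the opposite side, and the remainder is routine.
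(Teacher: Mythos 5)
Your proposal is correct and follows essentially the same route as the paper: stratify the indecomposable $\tau$-rigid modules by projective dimension, use the finiteness of basic tilting $\Lambda$-modules (via the fact that rigid modules of projective dimension at most $1$ are partial tilting modules completable to tilting modules) for the strata of projective dimension at most $1$, and use Proposition \ref{2.10} together with the grade-$2$ hypothesis and the finiteness of tilting $\Lambda^{op}$-modules for the stratum of projective dimension $2$. The only difference is that you spell out explicitly (Bongartz completion, the torsionless-implies-$\pd\leq 1$ observation) steps that the paper asserts without comment.
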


\begin{proof}

By the assumption, there are finite number of tilting modules which
implies that there are finite number of indecomposable $\tau$-rigid
$\Lambda$-modules and $\Lambda^{op}$-modules of projective dimension
less than or equal to $1$. Then by Proposition \ref{2.10}, the
number of indecomposable $\tau$-rigid $\Lambda$-module of grade $2$
is equal to the number of indecomposable non-projective $\tau$-rigid
submodules $N$ of $\Lambda^{op}$. Since ${\rm gl.dim}\Lambda=2$, we
get that $\pd_{\Lambda}N=1$, and hence the number of this class of
modules is finite. Note that all indecomposable $\tau$-rigid
$\Lambda$-modules with projective dimension 2 are of grade 2, then
the number of indecomposable $\tau$-rigid modules with projective
dimension $2$ is finite by Proposition \ref{2.10}.
\end{proof}

Immediately, we have the following corollary which confirms the
$\tau$-tilting finiteness of the the Auslander algebra of
$K[x]/(x^n)$ showed in \cite{IZ}.

\begin{corollary}\label{2.12} Let $\Lambda$ be an Auslander algebra
admitting finite number of basic tilting $\Lambda$-modules and
tilting $\Lambda^{op}$-modules. If all indecomposable $\tau$-rigid
modules $M$ with ${\rm pd}_{\Lambda}M=2$ are of ${\rm grade}\ 2$,
then $\Lambda$ is $\tau$-tilting finite.
\end{corollary}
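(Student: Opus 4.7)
The plan is to observe that Corollary \ref{2.12} is essentially the specialization of Theorem \ref{2.11} to the Auslander-algebra setting, so after checking that the hypotheses of Theorem \ref{2.11} are met by $\Lambda$, the conclusion follows immediately. The one thing that needs a moment of thought is that Theorem \ref{2.11} is formulated for algebras of global dimension exactly $2$, whereas by Definition \ref{2.1} an Auslander algebra only satisfies $\mathrm{gl.dim}\,\Lambda\leq 2$, so I will split into cases according to $\mathrm{gl.dim}\,\Lambda$.

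First I would dispose of the degenerate cases. If $\mathrm{gl.dim}\,\Lambda=0$, then $\Lambda$ is semisimple and there are only finitely many $\Lambda$-modules up to isomorphism, so $\Lambda$ is automatically $\tau$-tilting finite. If $\mathrm{gl.dim}\,\Lambda=1$, then $\Lambda$ is hereditary; every $\tau$-rigid $\Lambda$-module has projective dimension at most $1$, so being $\tau$-rigid is the same as being rigid, and hence the indecomposable $\tau$-rigid modules are precisely the indecomposable summands of the (finitely many) basic tilting modules, which are finite in number by hypothesis. Thus $\Lambda$ is $\tau$-tilting finite.

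In the remaining case $\mathrm{gl.dim}\,\Lambda=2$, I would simply invoke Theorem \ref{2.11}. Its two hypotheses are that $\Lambda$ has finitely many basic tilting $\Lambda$-modules and tilting $\Lambda^{\mathrm{op}}$-modules, which is given, and that every indecomposable $\tau$-rigid module of projective dimension $2$ has grade $2$, which is exactly our standing assumption. So Theorem \ref{2.11} yields that $\Lambda$ is $\tau$-tilting finite.

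Since the argument is just a direct appeal to Theorem \ref{2.11} together with trivial low-dimension checks, there is really no main obstacle; the only subtlety is the bookkeeping regarding the possible values of $\mathrm{gl.dim}\,\Lambda$, and that is handled by the short case split above.
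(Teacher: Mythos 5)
Your proposal is correct and matches the paper's (implicit) argument: the paper derives Corollary \ref{2.12} immediately from Theorem \ref{2.11} with no further comment. Your extra case split for $\mathrm{gl.dim}\,\Lambda\leq 1$ is a reasonable piece of bookkeeping that the paper omits, and your handling of those degenerate cases is sound.
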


For a module $M$, denote by ${\rm rad}M$ and ${\rm soc}M$ the
radical and the socle of $M$, respectively. Now we give the
following classification of Auslander algebras admitting a unique
simple module of projective dimension 2 which gives a support to
Theorem \ref{2.6} and Corollary \ref{2.12}.

\begin{theorem}\label{2.7} Let $\Lambda$ be an Auslander algebra. If $\Lambda$ admits a unique simple $\Lambda$-module
 $S$ with ${\rm pd}_{\Lambda}S=2$, then
\begin{itemize}
\item[\rm(1)] $\Lambda$ is either the Auslander algebra of the path algebra $R=KQ$ with $Q:1\rightarrow 2$ or the
Auslander algebra of the Nakayama local algebra $R$ of radical
square zero.
\item[\rm(2)] Every indecomposable $\Lambda$-module $M$ with ${\rm pd}_{\Lambda}M\leq 1$
is rigid, and hence $\tau$-rigid.
\item[\rm(3)] All indecomposable $\tau$-rigid $\Lambda$-modules $N$ with
$\pd_{\Lambda}N=2$ are of grade $2$.
\end{itemize}
\end{theorem}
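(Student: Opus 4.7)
The approach is to translate the hypothesis into a statement about $R$ via Proposition \ref{2.2}, classify the resulting $R$, and then verify (2) and (3) by inspecting the two explicit Auslander algebras.

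By Proposition \ref{2.2}, the simple $\Lambda$-modules of projective dimension $2$ correspond bijectively to the non-projective indecomposable $R$-modules. The hypothesis therefore says that $R$ has a unique non-projective indecomposable $X$. For (1) I split on whether $R$ is local. In the local case with unique simple $S = R/\rad R$, one has $X = S$; and if $\rad^2 R \neq 0$ then $R/\rad^2 R$ would be a second non-projective indecomposable, so $\rad^2 R = 0$ and $\rad R \cong S^n$. For $n \geq 2$ the quotients $R/U$ for various subspaces $U \subseteq \rad R$ produce further non-isomorphic non-projective indecomposables (one of every length between $2$ and $n+1$), forcing $n = 1$ and $R \cong K[x]/(x^2)$. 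In the non-local case each simple $S_i = P_i/\rad P_i$ is indecomposable and lies in the finite list of indecomposables, so $S_i$ equals either $P_i$ or $X$; hence exactly one simple (say $S_1$) equals $X$ and $P_i = S_i$ for $i \geq 2$. Applying the same argument to $P_1$ gives $\rad^2 P_1 = 0$; then ruling out loops at vertex $1$ (which would produce additional non-projective indecomposables from the loop part of $P_1$) and multi-arrows (Kronecker-type, contradicting representation-finiteness), the Gabriel quiver of $R$ is forced to be the claw $K_{1,n-1}$. Dynkin classification plus uniqueness of the non-projective indecomposable then force $n = 2$, giving $R \cong KQ$ with $Q : 1 \to 2$.

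For (2) and (3), both resulting Auslander algebras are Nakayama-type with exactly five indecomposable modules each, which I would enumerate explicitly. For (2), the enumeration shows every indecomposable $M$ with $\pd_\Lambda M \leq 1$ is projective or simple; projectives are trivially rigid, simples are rigid by Proposition \ref{2.4}, and rigidity implies $\tau$-rigidity when $\pd_\Lambda M \leq 1$. For (3), I list the indecomposables of projective dimension $2$. In the $A_2$ case only $S$ itself has $\pd_\Lambda = 2$; by Lemma \ref{2.3}(2), $\grade S \in \{0, 2\}$, and checking that $S$ is not a summand of $\soc \Lambda$ gives $\grade S = 2$. In the $K[x]/(x^2)$ case there is also a non-simple uniserial $M_1 = P_R/\soc P_R$; a direct computation of the Auslander transpose yields $\tau M_1 \cong P_S$ with a nonzero map $M_1 \to P_S$, so $M_1$ is not $\tau$-rigid and the grade-$2$ conclusion for $M_1$ is vacuous.

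The principal obstacle is the non-local part of (1): ruling out loops, multi-arrows, and higher Dynkin-star quivers using only the hypothesis of a unique non-projective indecomposable. Once the classification is fixed, statements (2) and (3) reduce to routine checks against the two explicit module lists.
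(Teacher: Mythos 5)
Your argument is correct, and for the classification in (1) it takes a genuinely different route from the paper's. The paper also reduces via Proposition \ref{2.2} to the statement that $R$ has a unique non-projective indecomposable $X$, and first shows $X$ is simple by a factor-module argument; but it then splits on whether $X$ is \emph{injective}: if not, all indecomposable injectives are projective, so $R$ is self-injective, hence local, and $\Omega^1 X\simeq X$ forces $\rad^2R=0$ and Nakayama by \cite[IV, Proposition 2.16]{AuRS}; if $X$ is injective, $R$ is Auslander's $1$-Gorenstein, $\Omega^1X$ is projective by Lemma \ref{2.a}, so $R$ is hereditary, and a unique-composition-series argument on both sides pins down $R=KQ$ with $Q:1\to 2$. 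Your split on whether $R$ is \emph{local} reaches the same two algebras by elementary counting of local quotients of the indecomposable projectives, which is arguably more self-contained (and note that your multiple-arrow case can be handled by the same trick --- a local quotient of $P_1$ with radical $S_j^2$ is a third non-projective indecomposable --- avoiding the appeal to representation-finiteness of $eRe$; also your parenthetical ``length between $2$ and $n+1$'' should read $2$ to $n$, since $R/0=R$ is projective). What your write-up buys beyond the paper is the explicit verification of (2) and (3), which the paper dismisses with ``follow from (1) easily'': your enumeration of the five indecomposables in each case, the observation that every indecomposable of projective dimension at most $1$ is projective or simple, and the computation that $\tau M_1\cong P_S$ with $\Hom_\Lambda(M_1,P_S)\neq 0$ (so the grade-$0$ module $M_1$ is not $\tau$-rigid) are exactly the checks needed, and they are correct; just make sure to state explicitly that the simple of projective dimension $2$ in the $K[x]/(x^2)$ case also has grade $2$ by the same socle argument you use in the $A_2$ case.
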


\begin{proof}
Since (2) and (3) follow from (1) easily, we only show (1). By
Proposition 2.2, there is a unique non-projective indecomposable
$R$-module $X$ such that the $AR$-sequence $0\rightarrow \tau
X\rightarrow E\rightarrow X\rightarrow 0$ in $\mod R$ induces a
minimal projective resolution of $S$: $0\rightarrow {\rm
Hom}_{R}(A,\tau X)\rightarrow {\rm Hom}_{R}(A,E)\rightarrow {\rm
Hom}_{R}(A,X)\rightarrow S\rightarrow 0$. Then all indecomposable
modules are projective except $X$. We claim that $X$ should be
simple. Otherwise, there would be a simple factor module $Y$ of $X$
such that $Y\not\simeq X$. By the proof above $Y$ would be
projective and hence $X\simeq Y$ is projective, a contradiction. Now
we divide the proof in two parts.

(a) If $X$ is not injective, then all indecomposable injective
$R$-modules are projective, and hence $R$ is self-injective. So we
get that $R$ is local with a unique simple module $X$. Otherwise,
there would be a simple projective-injective $R$-module. One gets a
contradiction since $R$ is basic and connected. Taking a minimal
projective resolution of $X$, we get the following exact sequence
$0\rightarrow \Omega^1 X\rightarrow P_0(X)(=R)\rightarrow
X\rightarrow 0$. By Lemma 2.6, $\Omega^1 X$ is indecomposable
non-projective, and hence $\Omega^1 X\simeq X$. Then ${\rm rad}^2
R=0$ holds. By \cite[IV , Proposition 2.16]{AuRS}, $R$ is a Nakayama
algebra.

(b) If $X$ is injective, then $X\not\simeq {\rm soc} P$ for any
indecomposable projective $R$-module. Hence the injective envelope
$I^0(R)$ is projective, that is, $R$ is Auslander's $1$-Gorenstein
\cite{FGR}.  Then $P_0(X)$ is projective-injective since $X$ is
injective. Taking a part of minimal projective resolution of $X$:
$0\rightarrow\Omega^{1}X\rightarrow P_0(X)\rightarrow
X\rightarrow0$, one gets that $\Omega^{1}X$ is indecomposable and
projective by Lemma \ref{2.a}. Then we conclude that $R$ is a
hereditary algebra.

In the following we show $R$ is a Nakayama algebra. One can show
that $P_0(X)$ is the unique projective-injective module in $\mod R$
since $R$ is a basic connected hereditary algebra. Then every
indecomposable projective $R$-module is contained in $P_0(X)$ and
admits a unique composition series. By \cite{FGR}, $R^{op}$ is also
Auslander's $1$-Gorenstein. Similarly, every indecomposable
projective $R^{op}$-module admits a unique composition series. So
$R$ is a Nakayama algebra. By \cite[V, Theorem 3.2]{AsSS} and the
fact all indecomposable $R$-modules are projective except one, we
get that $R=KQ$ with $Q:1\rightarrow 2$.
\end{proof}

At the end of this paper we give another two examples to show our
main results.

\begin{example}\label{3.9}

Let $\Lambda$ be the Auslander algebra of $K[x]/(x^n)$. Then we have
the following:
\begin{enumerate}[\rm(1)]
\item  $\Lambda$ is given by \[\xymatrix{
1\ar@<2pt>[r]^{a_1}&2\ar@<2pt>[r]^{a_2}\ar@<2pt>[l]^{b_2}&3\ar@<2pt>[r]^{a_3}\ar@<2pt>[l]^{b_3}&\cdots\ar@<2pt>[r]^{a_{n-2}}\ar@<2pt>[l]^{b_4}&n-1\ar@<2pt>[r]^{a_{n-1}}\ar@<2pt>[l]^{b_{n-1}}&n\ar@<2pt>[l]^{b_n}
}\] with relations $a_{1} b_{2}= 0$ and $a_{i} b_{i+1} =b_{i}
a_{i-1}$ for any $2 \leq i \leq n-1$. $\Lambda$ is of infinite
representation type if $n\geq 5$.

\item All indecomposable module $M$ with
$\pd_{\Lambda}M=1=\id_{\Lambda}M$ are direct summands of tilting
modules, and hence $\tau$-rigid.

\item All indecomposable $\tau$-rigid modules of projective
dimension $2$ are of grade $2$. (See \cite{IZ} for details)

\item The number of tilting $\Lambda$-modules (resp.
$\Lambda^{op}$-modules) is $n!$ (\cite{IZ,T}). By Theorem
\ref{2.11}, $\Lambda$ is $\tau$-tilting finite.

\item If $n=4$, then the indecomposable module $M=\begin{smallmatrix}
&2&&4\\&&3\\&&&4\end{smallmatrix}$ is ($\tau$-)rigid with
$\pd_{\Lambda}M=1$ and $\id_{\Lambda}M=2$ and $M^{**}=\begin{smallmatrix} &2\\ 1&&3\\
&2&&4\\&&3\\&&&4\end{smallmatrix}$. But
$\Ext_{\Lambda}^2(S(2),M)\neq0$.

\end{enumerate}
\end{example}

We should remark that there does exist an Auslander algebra
$\Lambda$ such that an indecomposable $\tau$-rigid $\Lambda$-module
with projective dimension $2$ does not necessarily have grade $2$.
\begin{example}\label{3.10}
Let $\Lambda$ be the Auslander algebra of $KQ$ with
$Q:1\stackrel{a_1}{\rightarrow}2\stackrel{a_2}{\rightarrow}3$. Then
\begin{enumerate}[\rm(1)]
\item $\Lambda$ is given by the following quiver $Q':$\[\xymatrix{\ &\ &4\ar[dr]^{a_5}&\ &\ \\
\ &2\ar[ur]^{a_3}\ar[dr]^{a_2}&\ &5\ar[dr]^{a_6}&\ \\
1\ar[ur]^{a_1}&\ &3\ar[ur]^{a_4}&\ &6}\] with relations $a_2a_1=0$,
$a_5a_3=a_4a_2$ and $a_6a_4=0$.
\item All indecomposable modules are $\tau$-rigid.
\item The indecomposable module $M=\begin{smallmatrix} &2 \\
3&&4\end{smallmatrix}$ is of projective dimension $2$, but it is not
of grade $2$ since $\pd_{\Lambda}S(4)=1$.
\end{enumerate}

\end{example}
 \vspace{0.5cm}

\end{document}